\newtheorem{theorem}{Theorem}
\newtheorem{lemma}{Lemma}
\newtheorem{remark}{Remark}
\newcommand{\NN}{{\mathbb N}}
\newcommand{\FF}{{\mathbb F}}
\newcommand{\bsT}{{\mathbf T}}
\newcommand{\bsc}{{\boldsymbol c}}
\newcommand{\supp}{{\rm supp}}
\newcommand{\bszero}{{\boldsymbol 0}}
\newenvironment{proof}{\begin{trivlist}

    \item[\hskip\labelsep{\it Proof.}]}{$\hfill\Box$\end{trivlist}}
\title{Explicit constructions of Vandermonde sequences using global function fields}
\author{Roswitha Hofer and Harald Niederreiter}
\date{}
\begin{document}

\setcounter{page}{1}
\maketitle

\begin{abstract}
The authors recently introduced so-called Vandermonde nets. These digital nets share properties with the well-known polynomial lattices. For example, both can be constructed via component-by-component search algorithms. A striking characteristic of the Vandermonde nets is that for fixed $m$ an explicit construction of $m\times m$ generating matrices over
the finite field $\FF_q$ is known for dimensions $s\leq q+1$.  
This paper extends this explicit construction in two directions. We give a maximal extension in terms of $m$ by introducing a construction algorithm for $\infty\times\infty$ generating matrices for digital sequences over $\FF_q$, which works in the rational function field over $\FF_q$. Furthermore, we generalize this method to global function fields
of positive genus, which leads to extensions in the dimension $s$.  

\end{abstract}

\noindent{\textbf{Keywords:} Low-Discrepancy Sequences, Digital Sequences, Generating Matrices, Global Function Field.}\\
\noindent{\textbf{MSC2010:} 11K31, 11K38, 14G15.}

\section{Introduction}
The authors \cite{HN} recently introduced a new familiy of digital $(t,m,s)$-nets called Vandermonde nets. In the present paper, we expand on this work. We consider chains of embedded Vandermonde nets or, equivalently, digital $(\bsT,s)$-sequences with generating matrices possessing a Vandermonde-type structure. This leads to new constructions of digital $(\bsT,s)$-sequences. We refer to the monograph \cite{DP} and the recent handbook article \cite{N13} for the necessary background on digital $(t,m,s)$-nets and digital $(\bsT,s)$-sequences. 

We start out by giving an explicit construction, using the rational function field $\FF_q(x)$ over the finite field $\FF_q$, of chains of embedded Vandermonde nets (we may equivalently speak of  Vandermonde sequences) which is based on methods and tools available in global function fields. Here the exponents appearing in the Vandermonde-type matrices are translated to multiples of selected divisors in Section~\ref{sec:2}. This relation to the Vandermonde structure is pointed out in Remark~\ref{rem:1}. Section~\ref{sec:3} gives a generalized version of the construction in global function fields with positive genus $g$. Finally, Section~\ref{sec:4} shows how, in the specific case where the entries of the generating matrices stem from local expansions at a rational place, it is possible to reach the usual best known quality parameter $t=g$, where $g$ is the genus of the underlying global function field. Concerning global function fields, we follow the notation and terminology in the book \cite{St}. 

Throughout this paper we use the convention that empty sums are considered to be $0$. Other statements or conditions depending on an empty index set (e.g. $j$ in the range $1\leq j\leq 0$) are disregarded.

\section{Embedded explicit construction in $\FF_q(x)$}\label{sec:2}

Let $s\geq 2$ be an integer and let $P_1,\ldots,P_s,P_\infty$ be distinct places of the rational function field $\FF_q(x)$, where $P_1,\ldots,P_s$ are rational and $P_\infty$ has an arbitrary degree $\mu$. Note $s\leq q+1$. (For example, choose $P_1$ as the infinite place of $\FF_q(x)$ and $P_2,\ldots,P_s$ as places corresponding to distinct monic linear polynomials $p_2(x),\ldots,p_s(x)$, say $p_2(x)=x+c_2=x,p_3(x)=x+c_3, \ldots, p_s(x)=x+c_s$, and let $P_\infty$ correspond to a further monic irreducible polynomial $p_\infty(x)$ of degree $\mu$.) The generating matrices $C^{(1)},\ldots,C^{(s)}\in\FF_q^{\NN\times \NN_0}$ are constructed row by row as follows. 
Choose 
$$\beta^{(1)}_j\in\mathcal{L}\left((j-1)(P_1-P_2)\right)\setminus\{0\}$$
and $$\beta^{(i)}_j\in\mathcal{L}\left(j(P_i-P_1)\right)\setminus\{0\}$$
for $j\geq 1$ and $i\in\{2,\ldots,s\}$. 
(In the example, one possible setting is $\beta^{(1)}_j=x^{j-1}$ and $\beta^{(i)}_j=1/(x+c_i)^j$ for $j\geq 1, 2\leq i\leq s$.)

\begin{lemma}\label{lem:1} Such $\beta_j^{(i)}$ exist for every $j\geq 1$ and $1\leq i\leq s$. 
We have \begin{enumerate}
\item $\nu_ {P_1}(\beta^{(1)}_j)=-j+1 $,
\item $\nu_{P_2}(\beta^{(1)}_j)=j-1 $,
\item $\nu_{P_i}(\beta^{(i)}_j)=-j $, and
\item $\nu_{P_1}(\beta^{(i)}_j)=j $
 \end{enumerate} for $j\geq 1$ and $2\leq i\leq s$. Furthermore, the system $\{\beta^{(i)}_j\}_{1\leq i\leq s, \, j\geq 1}$ is linearly independent over $\FF_q$. 
\end{lemma}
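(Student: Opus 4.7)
The plan is to split the lemma into three parts: existence of the $\beta_j^{(i)}$, determination of their valuations at the relevant places, and linear independence of the whole system.

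For existence, every divisor appearing in the definition is a difference of two rational places and hence has degree $0$. Since $\FF_q(x)$ has genus $g=0$, the Riemann-Roch theorem gives $\ell(D)=\deg D+1$ for any divisor $D$ with $\deg D\geq 0$, so each of the Riemann-Roch spaces $\mathcal{L}((j-1)(P_1-P_2))$ and $\mathcal{L}(j(P_i-P_1))$ is one-dimensional over $\FF_q$ and in particular contains a nonzero element.

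For the valuation identities (1)--(4), I would combine $\mathcal{L}$-space membership with the fact that every principal divisor has degree $0$. For $\beta_j^{(1)}\in\mathcal{L}((j-1)(P_1-P_2))$, membership yields $\nu_{P_1}(\beta_j^{(1)})\geq -(j-1)$, $\nu_{P_2}(\beta_j^{(1)})\geq j-1$, and $\nu_P(\beta_j^{(1)})\geq 0$ for every other place $P$. Weighting these inequalities by $\deg P$ and using $\sum_P \nu_P(\beta_j^{(1)})\deg P = 0$ (with $\deg P_1=\deg P_2=1$) forces equality throughout, yielding (1), (2), and the vanishing of every other valuation. The identical computation applied to $\beta_j^{(i)}\in\mathcal{L}(j(P_i-P_1))$ with $i\geq 2$ gives (3) and (4).

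For linear independence, suppose $\sum_{i,j}a_{i,j}\beta_j^{(i)}=0$ is a nontrivial finite linear combination. I would peel off the indices $i\geq 2$ one at a time by reading valuations at $P_i$: the valuation table above shows that every $\beta_j^{(i')}$ with $i'\neq i$ satisfies $\nu_{P_i}(\beta_j^{(i')})\geq 0$ (since $P_i$ lies outside the supports of both $(j-1)(P_1-P_2)$ and $j(P_{i'}-P_1)$ whenever $i'\neq i$), while the values $\nu_{P_i}(\beta_j^{(i)})=-j$ are pairwise distinct. Setting $j^{\ast}:=\max\{j:a_{i,j}\neq 0\}$ (if any), the strict-triangle inequality for valuations forces the valuation of the entire sum at $P_i$ to equal $-j^{\ast}$, contradicting $\nu_{P_i}(0)=+\infty$. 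This kills every $a_{i,j}$ with $i\geq 2$, after which the residual relation $\sum_j a_{1,j}\beta_j^{(1)}=0$ is eliminated in the same fashion at $P_1$, where $\nu_{P_1}(\beta_j^{(1)})=-j+1$ are again pairwise distinct. The main subtlety is the careful bookkeeping of which summands can carry a pole at the chosen place; once that is in hand, the rest is standard valuation theory combined with the degree-$0$ property of principal divisors.
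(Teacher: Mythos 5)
Your proposal is correct and follows essentially the same route as the paper's (much terser) proof: Riemann--Roch in genus $0$ for existence, the degree-zero property of principal divisors to force equality in the valuation bounds, and the resulting valuation table plus the ultrametric inequality for linear independence. One harmless quibble: for $i=2$ the place $P_2$ does lie in the support of $(j-1)(P_1-P_2)$ when $j\geq 2$, so your parenthetical justification is imprecise there, but your valuation table already gives $\nu_{P_2}(\beta_j^{(1)})=j-1\geq 0$, so the argument stands.
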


\begin{proof} Existence follows from the Riemann-Roch theorem \cite[Theorem 1.5.17]{St} since $\deg\left(P_i-P_h\right)=0$ for every choice of $1\leq i,h\leq s$, and therefore the dimension of $\mathcal{L}\left(P_i-P_h\right)$ is $1$. 
The values of the valuations follow directly from the construction of $\beta^{(i)}_j$ together with the fact that the degree of a principal divisor is $0$. The linear independence is now easily seen using the known values of the valuations at the places. 
\end{proof}

\begin{remark}\label{rem:1}{\rm
An alternative version would be to choose $\alpha_1\in\mathcal{L}\left(P_1-P_2\right)\setminus\{0\}$ and $\alpha_i\in\mathcal{L}\left(P_i-P_1\right)\setminus\{0\}$ for $2\leq i\leq s$ and set $\beta^{(1)}_j=\alpha_1^{j-1}$ and $\beta^{(i)}_j=\alpha_i^j$ for $j\geq 1$ and $2\leq i\leq s$, in order to obtain a Vandermonde structure of the matrix $C$ that determines the rows, i.e.,
$$C=\begin{pmatrix}1&\alpha_1 &\alpha_1^2 & \dots \\
\alpha_2&\alpha_2^2&\alpha_2^3 & \dots \\
\vdots&\vdots &\vdots & \dots \\
\alpha_s&\alpha_s^2&\alpha_s^3&\dots  \end{pmatrix}\in\FF_q^{s\times \NN}.$$
This conforms with the theory of Vandermonde nets developed in~\cite{HN}.
This choice is covered by the example at the beginning of this section, but is not adaptable to global function fields with positive genus. Note that the pole divisor of $\beta^{(2)}_1$ would be $P_2$, but this is impossible by the Weierstrass gap theorem \cite[Theorem 1.6.8]{St} which states that in the case of a positive genus, the integer $1$ has to be a gap number of the rational place $P_2$. }
\end{remark}

The $j$th row of $C^{(i)}$ is determined by the coefficients of the local expansion of $\beta^{(i)}_j$ at the place $P_\infty$, which is of the form $$\beta^{(i)}_j=\sum_{k=0}^\infty a^{(i)}_{j,k}z^k,$$
where $z=p_\infty(x)$ is a local parameter at $P_\infty$ and $a^{(i)}_{j,k}\in\{f(x)\in\FF_q[x] : \deg(f(x))< \mu\}$ for $k\geq 0$. Note that the construction ensures $\nu_{P_\infty}(\beta^{(i)}_{j})\geq 0$ for all $j\geq 1$ and $1\leq i\leq s$ and the set $\{f(x)\in\FF_q[x] : \deg(f(x))< \mu\}$ forms a vector space over $\FF_q$ with basis $1,x,\ldots,x^{\mu-1}$. (Note the linearity of the local expansion over $\FF_q$.) Now identify $a^{(i)}_{j,k}$ with a vector in $\FF_q^\mu$ with respect to the basis $1,x,\ldots,x^{\mu-1}$ and build the row $\bsc_j^{(i)}$ by concatenating these vectors. 

Before describing the quality of this construction in Theorem~\ref{thm:1} below, we note the following general fact about digital $(\bsT,s)$-sequences.

\begin{lemma}\label{lem:2}
Let $\mu,s\in\NN$ and let $C^{(1)},\ldots,C^{(s)}$ be generating matrices of a digital $(\bsT,s)$-sequence over $\FF_q$. If $\bsT(\ell\mu)\leq t$ for some $t \in \NN_0$ and every $\ell\in\NN$, then $\bsT:\NN\to\NN_0$ is given by $\bsT(m)=\min(m, t+r(m))$, where $r(m)$ is the least residue of $m$ mod $\mu$.
\end{lemma}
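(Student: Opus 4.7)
The strategy is to establish the claimed identity as an upper bound, which is what ``$\bsT$ is given by'' means in the digital-sequence framework: the hypothesis only provides upper bounds at $m = \ell \mu$, so only an upper bound at general $m$ is available, and the formula on the right will turn out to be a valid quality function for the sequence.

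Two ingredients are needed. The first is the trivial bound $\bsT(m) \leq m$ for all $m$: taking $t = m$, the defining linear-independence condition of a digital $(t,m,s)$-net requires $d_1 + \cdots + d_s = 0$, which forces all $d_i = 0$ and is vacuously satisfied. The main ingredient is the one-step propagation inequality $\bsT(m+1) \leq \bsT(m) + 1$. To see this, unwind the digital-net condition for the first $q^{m+1}$ points: for every tuple with $d_1 + \cdots + d_s = m - \bsT(m)$, the first $d_i$ rows of the $C^{(i)}$ restricted to the first $m+1$ columns must be linearly independent over $\FF_q$. By hypothesis these same row-families are already linearly independent when restricted to the first $m$ columns, and appending one further coordinate to each vector cannot destroy linear independence.

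With these two tools in hand the conclusion follows from a short case analysis. Write $m = \ell \mu + r(m)$ with $\ell = \lfloor m/\mu \rfloor$. If $\ell \geq 1$, iterating the one-step propagation inequality $r(m)$ times starting from the hypothesis $\bsT(\ell \mu) \leq t$ yields $\bsT(m) \leq t + r(m)$. If $\ell = 0$, then $m = r(m) < \mu$ and the trivial bound already gives $\bsT(m) \leq m = r(m) \leq t + r(m)$. Combining these with the general estimate $\bsT(m) \leq m$ produces $\bsT(m) \leq \min(m, t + r(m))$ in every case.

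I do not anticipate any serious obstacle here. The only real content is the column-extension observation, which is essentially a restatement of how a digital $(\bsT,s)$-sequence specialises to a digital net on its initial segments. The only care required is the boundary case $m < \mu$, where the hypothesis gives no direct information and one must fall back on the trivial bound $\bsT(m) \leq m$.
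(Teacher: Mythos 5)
Your proof is correct and matches the paper's intent: the paper dismisses this lemma with ``easily follows from the definition of $\bsT$ depending on the rank structure,'' and your column-extension observation (linear independence of the first $m$ coordinates persists after appending a coordinate, plus the trivial bound $\bsT(m)\leq m$ for $m<\mu$) is exactly the rank-structure argument being alluded to. Your reading of ``is given by'' as asserting that $\min(m,t+r(m))$ is a valid quality function (an upper bound, with no minimality claim) is also the correct interpretation, consistent with how the lemma is used in Theorems 1 and 2.
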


\begin{proof}
This easily follows from the definition of $\bsT:\NN\to\NN_0$ depending on the rank structure of $C^{(1)},\ldots,C^{(s)}$. 
\end{proof}

\begin{theorem}\label{thm:1}
The matrices $C^{(1)},\ldots,C^{(s)}$ constructed in this section generate a digital $(\bsT,s)$-sequence over $\FF_q$ with $\bsT(m)=r(m)$, where $r(m)$ is the least residue of $m$ mod $\mu$.
\end{theorem}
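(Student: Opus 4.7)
The plan is to prove $\bsT(\ell\mu) \leq 0$ for every $\ell \in \NN$; Lemma~\ref{lem:2} then immediately yields $\bsT(m) = \min(m, r(m)) = r(m)$. Unfolding the definition of a digital $(\bsT,s)$-sequence, I must check that for any choice of non-negative integers $d_1,\ldots,d_s$ with $d_1+\cdots+d_s = \ell\mu$, the rows $\bsc_j^{(i)}$ ($1\leq i\leq s$, $1\leq j\leq d_i$), truncated to their first $\ell\mu$ columns, are linearly independent over $\FF_q$. I would argue by contradiction: suppose there exist scalars $\lambda_{i,j} \in \FF_q$, not all zero, giving a linear dependence, and set $\gamma := \sum_{i,j}\lambda_{i,j}\beta_j^{(i)}$.

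The proof then rests on two observations. First, since the $z$-coefficient $a^{(i)}_{j,k}$ of the local expansion at $P_\infty$ fills columns $k\mu,\ldots,(k+1)\mu-1$ of $\bsc_j^{(i)}$ via the basis $1,x,\ldots,x^{\mu-1}$, vanishing of the dependence on the first $\ell\mu$ columns is equivalent to $\nu_{P_\infty}(\gamma) \geq \ell$. Second, each $\beta_j^{(1)}$ with $1\leq j\leq d_1$ lies in $\mathcal{L}((j-1)(P_1-P_2)) \subseteq \mathcal{L}((d_1-1)P_1)$, and each $\beta_j^{(i)}$ with $i\geq 2$, $1\leq j\leq d_i$, lies in $\mathcal{L}(j(P_i-P_1)) \subseteq \mathcal{L}(d_iP_i - P_1)$; combining these inclusions with the $P_\infty$-valuation bound gives
$$\gamma \in \mathcal{L}\bigl((d_1-1)P_1 + d_2P_2 + \cdots + d_sP_s - \ell P_\infty\bigr).$$

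The degree of this divisor is $(d_1-1) + d_2 + \cdots + d_s - \ell\mu = -1$, so the Riemann-Roch space is $\{0\}$ and $\gamma = 0$. Lemma~\ref{lem:1} asserts that the family $\{\beta_j^{(i)}\}$ is linearly independent over $\FF_q$, forcing $\lambda_{i,j}=0$ for all $i,j$ and delivering the desired contradiction. The one piece of bookkeeping I anticipate needing care is the degenerate case $d_1=0$: here no $i=1$ terms contribute, but every $\beta_j^{(i)}$ with $i\geq 2$ satisfies $\nu_{P_1}(\beta_j^{(i)}) = j \geq 1$ by Lemma~\ref{lem:1}, so the uniform formula $(d_1-1)P_1 = -P_1$ in the divisor is still correct, and it is precisely this $-1$ coefficient at $P_1$ that keeps the overall degree count at $-1$ across every admissible tuple $(d_1,\ldots,d_s)$.
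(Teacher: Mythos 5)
Your proposal is correct and follows essentially the same route as the paper: reduce via Lemma~\ref{lem:2} to showing $\bsT(\ell\mu)=0$, translate a hypothetical dependence among the truncated rows into $\nu_{P_\infty}(\gamma)\geq\ell$, place $\gamma$ in $\mathcal{L}((d_1-1)P_1+d_2P_2+\cdots+d_sP_s-\ell P_\infty)$, and conclude $\gamma=0$ from the degree being $-1$, then invoke the linear independence in Lemma~\ref{lem:1}. The only cosmetic difference is that you check the extremal case $d_1+\cdots+d_s=\ell\mu$ (which suffices, since subsets of independent rows stay independent), while the paper allows $1\leq d_1+\cdots+d_s\leq\ell\mu$ and gets degree $\leq -1$.
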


\begin{proof}
Because of Lemma \ref{lem:2}, it suffices to prove $\bsT(\ell\mu)=0$ for every $\ell\in\NN$, i.e., to ensure for every choice of $\ell\in\NN$ and $d_1,\ldots,d_s\in\NN_0$
 satisfying $1\leq d_1+\cdots+d_s\leq \ell\mu$ that the matrix formed by the row vectors
$$\bsc_j^{(i,m)}:= (c_{j,0}^{(i)},\ldots,c_{j,m-1}^{(i)})\in\FF_q^m$$
with $1\leq j\leq d_i$, $1\leq i\leq s$, and $m=\ell\mu$ has full row rank $d_1+\cdots+d_s$. 

Suppose that \begin{equation}\label{equ:1}
\sum_{i=1}^s\sum_{j=1}^{d_i}b_j^{(i)}\bsc_j^{(i,m)}=\bszero\in\FF_q^{m}
\end{equation}
for some $b_j^{(i)}\in\FF_q$. 
The linearity of the local expansion yields $$\beta:=\sum_{i=1}^s\sum_{j=1}^{d_i}b_j^{(i)}\beta_j^{(i)}=\sum_{k=0 }^\infty\left(\sum_{i=1}^s\sum_{j=1}^{d_i}b_j^{(i)}a^{(i)}_{j,k}\right)z^k.$$ 
Using \eqref{equ:1} together with the construction principle, we obtain 
$$\beta=\sum_{k=\ell }^\infty\left(\sum_{i=1}^s\sum_{j=1}^{d_i}b_j^{(i)}a^{(i)}_{j,k}\right)z^k.$$ 
Thus $\nu_{P_\infty}(\beta)\geq \ell$. 

We observe that
$$\beta_j^{(1)}\in\mathcal{L}((j-1)(P_1-P_2))\subseteq\mathcal{L}((d_1-1)P_1+d_2P_2+\cdots+d_sP_s)$$
for every $1\leq j\leq d_1$ and that  $$\beta_j^{(i)}\in\mathcal{L}(jP_i-jP_1)\subseteq\mathcal{L}(d_iP_i-P_1)\subseteq\mathcal{L}((d_1-1)P_1+d_2P_2+\cdots+d_sP_s)  $$
for $1\leq j\leq d_i,2\leq i\leq s$. 
This together with $\nu_{P_\infty}(\beta)\geq \ell$ implies that 
$$\beta\in\mathcal{L}((d_1-1)P_1+d_2P_2+\cdots+d_sP_s-\ell P_\infty)=:\mathcal{L}(D_{m,d_1,\ldots,d_s}).$$
Finally, we compute the degree of $D_{m,d_1,\ldots,d_s}$ and obtain 
$$\deg(D_{m,d_1,\ldots,d_s})=\left(\sum_{i=1}^sd_s\right)-1-\ell\mu\leq \ell\mu-1-\ell\mu=-1.$$
Thus $\beta=0$. Finally, the linear independence of the system $\{\beta^{(i)}_j\}_{1\leq i\leq s, \, j\geq 1}$, ensured by Lemma~\ref{lem:1}, implies that $b_j^{(i)}=0$ for $1\leq j\leq d_i,\,1\leq i\leq s$. 
\end{proof}

\begin{remark}{\rm 
In the case where $\mu=2$, the construction in this section provides a digital $(\bsT,q+1)$-sequence over $\FF_q$ with $\bsT(m)= 0$ for even $m$ and $\bsT(m)= 1$ for odd $m$. This construction is substantially simpler than the one introduced by Niederreiter and \"{O}zbudak in \cite{NiOe07} with the same function $\bsT$. 
 }
\end{remark}

\section{Generalization to $g>0$}\label{sec:3}
Let $F$ be a global function field with full constant field $\FF_q$ and genus $g > 0$. 
Let $s\geq 2$ be an integer and let $P_1,\ldots,P_s,P_\infty$ be distinct places of $F$, where $P_1,\ldots,P_s$ are rational and $P_\infty$ has an arbitrary degree $\mu$. 
We additionally use a positive auxiliary divisor $D$ of $F$ with $\deg(D)=2g$ and $\supp(D)\cap \{P_\infty,P_2,\ldots,P_s\}=\varnothing$ (e.g. $D=2gP_1$). We choose 
$$\beta^{(1)}_j\in\mathcal{L}\left(D+(j-1)P_1-(j-1)P_2\right)\setminus\mathcal{L}\left(D+(j-2)P_1-(j-1)P_2\right)$$
and $$\beta^{(i)}_j\in\mathcal{L}\left(D+jP_i-jP_1\right)\setminus\left(\mathcal{L}\left(D+jP_i-(j+1)P_1\right)\cup\mathcal{L}\left(D+(j-1)P_i-jP_1\right)\right)$$
for $j\geq 1$ and $2\leq i\leq s$.

\begin{lemma}\label{lem:3} Such $\beta_j^{(i)}$ exist for every $j\geq 1$ and $1\leq i\leq s$. 
We have \begin{enumerate}
\item $\nu_ {P_1}(\beta^{(1)}_j)=-(j-1)-\nu_{P_1}(D) $, 
\item $\nu_{P_h}(\beta^{(l)}_j)\geq 0 $, 
\item $\nu_{P_i}(\beta^{(i)}_j)=-j $, and
\item $\nu_{P_1}(\beta^{(i)}_j)=j -\nu_{P_1}(D)$
 \end{enumerate} for $j\geq 1$, for $2\leq i\leq s$, and for $2\leq h\leq s,\,1\leq l\leq s$ satisfying $ h\neq l$. Furthermore, the system $\{\beta^{(i)}_j\}_{1\leq i\leq s, \, j\geq 1}$ is linearly independent over $\FF_q$. 
\end{lemma}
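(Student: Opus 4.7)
The plan is to follow the template of Lemma~\ref{lem:1}, replacing Riemann--Roch dimension~$1$ (for divisors of degree~$0$) by dimension~$g+1$ (for divisors of degree~$2g$). The auxiliary divisor $D$ of degree~$2g$ serves precisely to push every relevant divisor into the non-special range $\deg \geq 2g-1$, where Riemann--Roch gives a clean closed-form dimension.

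For existence, each ambient divisor, namely $D+(j-1)P_1-(j-1)P_2$ in the $i=1$ case and $D+jP_i-jP_1$ in the $i\geq 2$ case, has degree $2g$ and hence its $\mathcal{L}$-space has dimension $g+1$; each excluded divisor has degree $2g-1$ and hence $\mathcal{L}$-dimension~$g$. Excluding one proper subspace (the case $i=1$) leaves a nonempty set, and for $i\geq 2$, where two proper subspaces must be avoided simultaneously, I would invoke the elementary fact that a vector space over any field is never a set-theoretic union of two proper subspaces. The valuation equalities~(1), (3), (4) and the inequality~(2) then follow from the standard characterization $\beta\in\mathcal{L}(A)\iff \nu_P(\beta)\geq-\nu_P(A)$: membership in the ambient $\mathcal{L}$ supplies all the $\geq$ bounds (using $\supp(D)\cap\{P_\infty,P_2,\ldots,P_s\}=\varnothing$ to get~(2)), and the non-membership in the neighbouring $\mathcal{L}$, where the coefficient of $P_1$ or $P_i$ drops by one, tightens the relevant inequality to the stated equality.

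For linear independence, given a finite relation $\sum_{i,j}c_{i,j}\beta^{(i)}_j=0$, I would argue place by place. For fixed $i_0\in\{2,\ldots,s\}$, inspecting $\nu_{P_{i_0}}$ reveals that the only summands with negative valuation are the $\beta^{(i_0)}_j$: this uses~(2) for $l\neq 1$ and a direct check for $l=1$ (the support condition on $D$, combined with the lower bound $j-1\geq 0$ when $i_0=2$). Since the valuations $-j$ are pairwise distinct, picking the largest $j$ with $c_{i_0,j}\neq 0$ produces a strictly minimal valuation in the sum, forcing all $c_{i_0,j}=0$. After eliminating every $c_{i,j}$ with $i\geq 2$, the same argument at $P_1$, now using the distinct values $-(j-1)-\nu_{P_1}(D)$ supplied by~(1), kills the remaining $c_{1,j}$.

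The main obstacle is the existence step for $i\geq 2$: unlike in Lemma~\ref{lem:1} we must avoid two subspaces at once, and since $\FF_q$ is finite a density-type argument is unavailable. The rescue is the clean linear-algebraic fact that the union of two proper subspaces of a vector space over any field is never the whole space, which is exactly what makes the $g+1$ versus $g$ dimension jump provided by $D$ sufficient.
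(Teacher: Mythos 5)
Your proposal is correct and follows essentially the same route as the paper: Riemann--Roch gives dimensions $g+1$ versus $g$ for the ambient and excluded spaces, the valuation equalities come from membership in the ambient space combined with non-membership in the space whose coefficient at $P_1$ or $P_i$ is lowered by one (plus the support condition on $D$ for item~(2)), and linear independence follows from the pairwise distinct valuations at $P_2,\ldots,P_s$ and then at $P_1$. The only cosmetic difference is in the existence step for $i\geq 2$: the paper counts elements via inclusion--exclusion, $q^{g+1}-q^g-q^g+1\geq 1$, whereas you invoke the (equally valid, field-independent) fact that a vector space is never the union of two proper subspaces.
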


\begin{proof} 
The values of the valuations follow directly from the construction of $\beta^{(i)}_j$ and from the special choice of $D$.
We observe that \begin{align}
\deg\left(\mathcal{L}\left(D+(j-1)P_1-(j-1)P_2\right)\right)&=2g,\label{eq:1}\\
\deg\left(\mathcal{L}\left(D+(j-2)P_1-(j-1)P_2\right)\right)&=2g-1, \label{eq:3}\\
\deg\left(\mathcal{L}\left(D+jP_i-jP_1\right)\right) &=2g,\label{eq:2}\\
\deg\left(\mathcal{L}\left(D+(j-1)P_i-jP_1\right) \right)&=2g-1,\label{eq:4}\\ 
\deg\left(\mathcal{L}\left(D+jP_i-(j+1)P_1\right) \right)&=2g-1, \label{eq:5}\\
 \left(\mathcal{L}\left(D+jP_i-(j+1)P_1\right)\cap\mathcal{L}\left(D+(j-1)P_i-jP_1 \right)\right) &\supseteq \{0\}.\label{eq:6}
\end{align}

Existence of $\beta_j^{(1)}$ for $j\geq 1$ follows directly from the Riemann-Roch theorem together with \eqref{eq:1} and \eqref{eq:3}. 
Existence of $\beta_j^{(i)}$ for $j\geq 1,2\leq i\leq s$ follows from
\begin{align*}
\left|\mathcal{L}\left(D+jP_i-jP_1\right)\right|-\left|\mathcal{L}\left(D+(j-1)P_i-jP_1\right) \right|-\left|\mathcal{L}\left(D+jP_i-(j+1)P_1\right) \right| & \\
+ \left|\mathcal{L}\left(D+jP_i-(j+1)P_1\right)\cap\mathcal{L}\left(D+(j-1)P_i-jP_1\right)\right| \geq  q^{g+1}-q^g-q^g +1& \geq 1 ,
\end{align*}
where we used the Riemann-Roch theorem together with \eqref{eq:2}, \eqref{eq:4}, \eqref{eq:5}, and \eqref{eq:6}. 

The values of the valuations can be used to see the linear independence of the $\beta_j^{(i)}$. 
\end{proof}

The $j$th row of $C^{(i)}$ is determined by the coefficients of the local expansion of $\beta^{(i)}_j$ at the place $P_\infty$, which is of the form $$\beta^{(i)}_j=\sum_{k=0}^\infty a^{(i)}_{j,k}z^k,$$
where $z$ is a local parameter at $P_\infty$ and the coefficients $a^{(i)}_{j,k}$ are chosen in a set of representatives, which ensures linearity of the local expansion and that each representative can be uniquely identified with a vector in $\FF_q^\mu$. Now build the $j$th row $\bsc_j^{(i)}$ of $C^{(i)}$ by concatenating these vectors. Note that the construction guarantees $\nu_{P_\infty}(\beta^{(i)}_{j})\geq 0$ for all $j\geq 1$ and $1\leq i\leq s$. 

\begin{theorem}
The matrices $C^{(1)},\ldots,C^{(s)}$ constructed in this section generate a digital $(\bsT,s)$-sequence over $\FF_q$ with $\bsT(m)=\min(m,2g+r(m))$, where $r(m)$ is the least residue of $m$ mod $\mu$.
\end{theorem}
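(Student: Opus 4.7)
The plan is to mirror the proof of Theorem~\ref{thm:1}, with the auxiliary divisor $D$ absorbing the genus contribution. By Lemma~\ref{lem:2}, it suffices to establish $\bsT(\ell\mu) \leq 2g$ for every $\ell \in \NN$. Thus I fix $\ell \in \NN$ and $d_1,\ldots,d_s \in \NN_0$ with $1 \leq d_1 + \cdots + d_s \leq \ell\mu - 2g$ (the case $\ell\mu \leq 2g$ makes the conclusion $\bsT(\ell\mu) \leq \ell\mu$ trivial), and I must show that the row vectors $\bsc_j^{(i,m)}$ for $1 \leq j \leq d_i$, $1 \leq i \leq s$ and $m = \ell\mu$ are linearly independent over $\FF_q$.

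Assume a relation $\sum_{i=1}^s \sum_{j=1}^{d_i} b_j^{(i)} \bsc_j^{(i,m)} = \bszero$ and set $\beta := \sum_{i=1}^s \sum_{j=1}^{d_i} b_j^{(i)} \beta_j^{(i)}$. Exactly as in the proof of Theorem~\ref{thm:1}, the linearity of the local expansion together with the dependence relation forces the coefficients of $z^0,\ldots,z^{\ell-1}$ in the local expansion of $\beta$ at $P_\infty$ to vanish, giving $\nu_{P_\infty}(\beta) \geq \ell$.

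The key next step is to locate $\beta$ in a single Riemann--Roch space of small degree. From the construction we have the inclusions
$$\beta_j^{(1)} \in \mathcal{L}(D + (j-1)P_1 - (j-1)P_2) \subseteq \mathcal{L}(D + (d_1-1)P_1 + d_2 P_2 + \cdots + d_s P_s)$$
for $1 \leq j \leq d_1$, and
$$\beta_j^{(i)} \in \mathcal{L}(D + jP_i - jP_1) \subseteq \mathcal{L}(D + d_i P_i - P_1) \subseteq \mathcal{L}(D + (d_1-1)P_1 + d_2 P_2 + \cdots + d_s P_s)$$
for $1 \leq j \leq d_i$, $2 \leq i \leq s$. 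Here the hypothesis $\supp(D) \cap \{P_\infty, P_2, \ldots, P_s\} = \varnothing$ is what lets one move the $P_i$-contributions through $D$ cleanly. Combining these inclusions with $\nu_{P_\infty}(\beta) \geq \ell$ gives
$$\beta \in \mathcal{L}\bigl(D + (d_1-1)P_1 + d_2 P_2 + \cdots + d_s P_s - \ell P_\infty\bigr) =: \mathcal{L}(D_{m,d_1,\ldots,d_s}).$$

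Finally, a degree computation yields
$$\deg(D_{m,d_1,\ldots,d_s}) = 2g + \Bigl(\sum_{i=1}^s d_i\Bigr) - 1 - \ell\mu \leq 2g + (\ell\mu - 2g) - 1 - \ell\mu = -1,$$
so $\mathcal{L}(D_{m,d_1,\ldots,d_s}) = \{0\}$ and $\beta = 0$. The linear independence of the system $\{\beta_j^{(i)}\}$ established in Lemma~\ref{lem:3} then forces all $b_j^{(i)} = 0$, completing the proof. The only genuine subtlety, and the step I expect to be most delicate to write carefully, is the chain of inclusions of Riemann--Roch spaces: one has to verify that the pole/zero bookkeeping at $P_1, P_2, \ldots, P_s$ works uniformly in $d_1$ (including the degenerate cases $d_1 = 0$ and $d_i = 0$), and that the support disjointness of $D$ prevents the divisor $D$ from interacting with these places.
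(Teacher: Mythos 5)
Your proposal is correct and follows essentially the same route as the paper: reduce via Lemma~\ref{lem:2} to showing $\bsT(\ell\mu)\leq 2g$, deduce $\nu_{P_\infty}(\beta)\geq\ell$ from the vanishing relation, place $\beta$ in $\mathcal{L}(D+(d_1-1)P_1+d_2P_2+\cdots+d_sP_s-\ell P_\infty)$, and conclude from the degree bound $-1$ together with the linear independence in Lemma~\ref{lem:3}. The only small remark: the chain of inclusions needs nothing beyond divisor domination ($A\leq B$ implies $\mathcal{L}(A)\subseteq\mathcal{L}(B)$, valid also when $d_1=0$ or $d_i=0$); the support condition on $D$ is rather what guarantees $\nu_{P_\infty}(\beta_j^{(i)})\geq 0$ and the valuation statements of Lemma~\ref{lem:3}.
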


\begin{proof}
Because of Lemma \ref{lem:2}, it suffices to prove $\bsT(\ell\mu)\leq 2g$ for every $\ell\in\NN$, i.e., to ensure for every choice of $\ell\in\NN$ satisfying $\ell\mu>2g$ and of $d_1,\ldots,d_s\in\NN_0$ satisfying $1 \le d_1+\cdots+d_s\leq \ell\mu-2g$ that the matrix formed by the row vectors
$$\bsc_j^{(i,m)}:= (c_{j,0}^{(i)},\ldots,c_{j,m-1}^{(i)})\in\FF_q^m$$
with $1\leq j\leq d_i,1\leq i\leq s$, and $m=\ell\mu$ has full row rank $d_1+\cdots+d_s$. 

Suppose that 
\begin{equation}\label{equ:8}\sum_{i=1}^s\sum_{j=1}^{d_i}b_j^{(i)}\bsc_j^{(i,m)}=\bszero\in\FF_q^{m}\end{equation}
for some $b_j^{(i)}\in\FF_q$. 
The linearity of the local expansion yields  $$\beta:=\sum_{i=1}^s\sum_{j=1}^{d_i}b_j^{(i)}\beta_j^{(i)}=\sum_{k = 0}^\infty \left(\sum_{i=1}^s\sum_{j=1}^{d_i}b_j^{(i)}a^{(i)}_{j,k}\right)z^k.$$
Using \eqref{equ:8} together with the construction principle, we obtain 
$$\beta=\sum_{k = \ell}^\infty \left(\sum_{i=1}^s\sum_{j=1}^{d_i}b_j^{(i)}a^{(i)}_{j,k}\right)z^k.$$
 Therefore $\nu_{P_\infty}(\beta)\geq \ell$.
 
Note that
$$\beta_j^{(1)}\in\mathcal{L}(D+(j-1)P_1-(j-1)P_2)\subseteq\mathcal{L}(D+(j-1)P_1)\subseteq\mathcal{L}(D+(d_1-1)P_1+d_2P_2+\cdots+d_sP_s)  $$ for $1\leq j\leq d_1$ and  $$\beta_j^{(i)}\in\mathcal{L}(D+jP_i-jP_1)\subseteq\mathcal{L}(D+d_iP_i-P_1)\subseteq\mathcal{L}(D+(d_1-1)P_1+d_2P_2+\cdots+d_sP_s)  $$
for $1\leq j\leq d_i,\, 2\leq i\leq s$. This together with $\nu_{P_\infty}(\beta)\geq \ell$ implies that 
$$\beta \in \mathcal{L}(D+(d_1-1)P_1+d_2P_2+\cdots+d_sP_s-\ell P_\infty)=:\mathcal{L}(D_{m,d_1,\ldots,d_s}).$$ 
We observe that $$\deg(D_{m,d_1,\ldots,d_s})\leq 2g+m-2g-1-m=-1$$ which implies that $\beta= 0$. Finally, the linear independence of the system $\{\beta^{(i)}_j\}_{1\leq i\leq s, \, j\geq 1}$, ensured by Lemma \ref{lem:3}, yields $b_j^{(i)} = 0$ for $1\leq j\leq d_i,\,1\leq i\leq s$. 
\end{proof}

\section{The case where $g>0$ and $P_\infty$ is rational}\label{sec:4}

Here a similar trick as used in the Xing-Niederreiter construction \cite{XiNi95} is available to save an additive term $g$ in the quality-parameter function $\bsT$.
Let $F$ be a global function field with full constant field $\FF_q$ and genus $g > 0$. 
Let $s\geq 2$ be an integer and let $P_1,\ldots,P_s,P_\infty$ be distinct rational places of $F$.

Analogously to the last section, we use a positive auxiliary divisor $D$ of $F$ with $\deg(D)=2g$ and $\supp(D)\cap \{P_\infty,P_2,\ldots,P_s\}=\varnothing$ (e.g. $D=2gP_1$) and choose 
$$\beta^{(1)}_j\in\mathcal{L}\left(D+(j-1)P_1-(j-1)P_2\right)\setminus\mathcal{L}\left(D+(j-2)P_1-(j-1)P_2\right)$$
and $$\beta^{(i)}_j\in\mathcal{L}\left(D+jP_i-jP_1\right)\setminus\left(\mathcal{L}\left(D+jP_i-(j+1)P_1\right)\cup
\mathcal{L}\left(D+(j-1)P_i-jP_1\right)\right),$$
for $j\geq 1$ and $2\leq i\leq s$. 
Note that the construction and the restriction on $D$ ensure $\nu_{P_\infty}(\beta^{(i)}_j)\geq 0$. Note that Lemma \ref{lem:3} is valid. 

Similarly to the Xing-Niederreiter construction, we construct a basis  $\{w_1,\ldots,w_{g}\}$ of the vector space $\mathcal{L}(D-P_1)$ with dimension $l(D-P_1)=g$ as follows.
By the Riemann-Roch theorem, we know the dimensions $l(D-P_{1})=g$ and $l(D-P_1-2gP_{\infty})=0$, hence there exist integers $0\leq n_1<\cdots<n_g<2g$ 
such that $$l(D-P_1-n_fP_{\infty})=l(D-P_1-(n_f+1)P_{\infty})+1 \text { for } 1\leq f\leq g.$$
Now choose $w_f\in\mathcal{L}(D-P_1-n_fP_{\infty})\setminus \mathcal{L}(D-P_1-(n_f+1)P_{\infty})$ to obtain the basis $$\{w_1,\ldots,w_g\} \text{ of  } \mathcal{L}(D-P_1).$$ Note that $\nu_{P_\infty}(w_f)=n_f$, $\nu_{P_1}(w_f)\geq 1-\nu_{P_1}(D)$, and $\nu_{P_i}(w_f)\geq 0$ for every $2\leq i\leq s$, $1\leq f\leq g$.

\begin{lemma}\label{lem:4}
The system $\{w_1,\ldots,w_g\}\cup \{\beta_j^{(i)}\}_{1\leq i\leq s, \, j\geq 1}$ is linearly independent over $\FF_q$. 
\end{lemma}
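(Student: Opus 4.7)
The plan is to argue by successively peeling off terms using the non-archimedean strong triangle inequality at a carefully chosen sequence of places, exploiting the valuation data already collected in Lemma~\ref{lem:3} together with the ones we have for the $w_f$. So suppose a finite linear combination
\[
\sum_{f=1}^g c_f w_f + \sum_{i=1}^s\sum_{j\geq 1} b_j^{(i)}\beta_j^{(i)}=0
\]
vanishes, with only finitely many $b_j^{(i)}\neq 0$.

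First I would handle the indices $i\geq 2$ one at a time using the valuation at $P_i$. Among all the elements appearing, only the $\beta_j^{(i)}$ have a pole at $P_i$: Lemma~\ref{lem:3} gives $\nu_{P_i}(\beta_j^{(i)})=-j$, while $\nu_{P_i}(\beta_j^{(l)})\geq 0$ for $l\neq i$ and $\nu_{P_i}(w_f)\geq 0$ by the construction of the $w_f$ (since $w_f\in\mathcal{L}(D-P_1)$ and $P_i\notin\supp(D)$). If some $b_j^{(i)}\neq 0$, pick the maximal such $j=j_0$; then $b_{j_0}^{(i)}\beta_{j_0}^{(i)}$ has valuation exactly $-j_0$ at $P_i$, strictly less than every other term in the sum, contradicting the ultrametric inequality. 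Hence $b_j^{(i)}=0$ for all $j\geq 1$ and all $2\leq i\leq s$.

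Next, with only the $c_f$ and the $b_j^{(1)}$ remaining, I would use $\nu_{P_1}$. We have $\nu_{P_1}(\beta_j^{(1)})=-(j-1)-\nu_{P_1}(D)$, while $\nu_{P_1}(w_f)\geq 1-\nu_{P_1}(D)$. In particular, every $\beta_j^{(1)}$ has strictly smaller $P_1$-valuation than any $w_f$, and the valuations $-(j-1)-\nu_{P_1}(D)$ are pairwise distinct for distinct $j$. Picking the maximal $j$ with $b_j^{(1)}\neq 0$ and applying the strong triangle inequality again forces $b_j^{(1)}=0$ for every $j$.

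Finally the identity reduces to $\sum_{f=1}^g c_f w_f=0$. Since $\nu_{P_\infty}(w_f)=n_f$ and the integers $n_1<\cdots<n_g$ are pairwise distinct, the same ultrametric argument at $P_\infty$ (or simply the fact that $\{w_1,\ldots,w_g\}$ was constructed as a basis of $\mathcal{L}(D-P_1)$) yields $c_1=\cdots=c_g=0$. No step is really an obstacle; the only care needed is the bookkeeping ensuring that in each elimination round the chosen place sees exactly one term of strictly minimal valuation, which is guaranteed by the valuation data in Lemma~\ref{lem:3} together with the supports condition $\supp(D)\cap\{P_\infty,P_2,\ldots,P_s\}=\varnothing$.
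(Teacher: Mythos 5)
Your proof is correct and follows essentially the same route as the paper's: eliminate the coefficients $b_j^{(h)}$ for $h\geq 2$ by comparing valuations at $P_h$, then the $b_j^{(1)}$ by comparing valuations at $P_1$ against $\nu_{P_1}(w_f)\geq 1-\nu_{P_1}(D)$, and finally use that the $w_f$ form a basis. The only difference is that you spell out the unique-minimal-valuation (ultrametric) argument explicitly, where the paper delegates this step to the valuation data of Lemma~\ref{lem:3}.
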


\begin{proof}
The linear independence of $\{w_1,\ldots,w_g\}$ and of $\{\beta_j^{(i)}\}_{j\geq 1}$ for every fixed $i=1,\ldots,s$ is obvious. 
Suppose that $$\sum_{f=1}^ga_fw_f +\sum_{i=1}^s\sum_{j=1}^vb_j^{(i)}\beta_j^{(i)}=0$$
for some $v\geq 1$ and $a_f,b_j^{(i)}\in\FF_q$. 
For a fixed $ h=2,\ldots, s$, we consider 
$$\sum_{j=1}^vb_j^{(h)}\beta_j^{(h)}=-\sum_{f=1}^ga_fw_f-\sum_{i=1\atop i\neq h}^s\sum_{j=1}^vb_j^{(i)}\beta_j^{(i)}.$$
We abbreviate the left-hand side by $\beta$. Now if $\beta \neq 0$, we know from Lemma \ref{lem:3} that 
$\nu_{P_h}(\beta)<0$. But the right-hand side satisfies $\nu_{P_h}(\beta)\geq 0$,  hence all coefficients on the left-hand side have to be $0$. We arrive at
$$\sum_{j=1}^vb_j^{(1)}\beta_j^{(1)}=-\sum_{f=1}^ga_fw_f.$$
We abbreviate the left-hand side by $\gamma$.
If there is a $b_j^{(1)}\neq 0$ for at least one $j\geq 1$, then the left-hand side yields $\nu_{P_1}(\gamma)\leq -\nu_{P_1}(D)$, but the right-hand side shows $\nu_{P_1}(\gamma)\geq  -\nu_{P_1}(D)+1$. Hence all $a_f$ and all $b_j^{(1)}$ have to be $0$. 
\end{proof}

The $j$th row of $C^{(i)}$ is determined by the coefficients of the local expansion of $\beta^{(i)}_j$ at the rational place $P_\infty$, which is of the form $$\beta^{(i)}_j=\sum_{k=0}^\infty a^{(i)}_{j,k}z_k,$$
where the coefficients $a^{(i)}_{j,k}\in\FF_q$ for $k\geq 0,j\geq 1,1\leq i\leq s$. The sequence $(z_k)_{k\geq 0}$ satisfies for $ k\in\NN_0\setminus\{n_1,\ldots,n_g\}$ that $\nu_{P_\infty}(z_k)=k$, and for $ k=n_f$ with $f\in\{1,\ldots,g\}$ that $z_k=w_f$. 
This preliminary construction yields the following sequence in $\FF_q$:
$$({a^{(i)}_{j,0}},\ldots,\widehat{a^{(i)}_{j,n_1}},a^{(i)}_{j,n_1+1},\ldots,\widehat{a^{(i)}_{j,n_g}},a^{(i)}_{j,n_g+1},\ldots)\in\FF_q^{\NN_0}.$$
After deleting the terms with the hat, we arrive at a sequence which serves as the $j$th row $\bsc_j^{(i)}$ of $C^{(i)}$, and we write
$$\bsc_j^{(i)}=(c_{j,0}^{(i)},c_{j,1}^{(i)},\ldots)\in\FF_q^{\NN_0}.$$

\begin{theorem}
The matrices $C^{(1)},\ldots,C^{(s)}$ constructed in this section generate a digital $(t,s)$-sequence over $\FF_q$ with $t=g$. 
\end{theorem}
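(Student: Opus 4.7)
The plan is to follow the template of the proof of Theorem~2 (the Section~3 theorem), but to exploit that $P_\infty$ is rational by using the basis $\{w_1,\ldots,w_g\}$ of $\mathcal{L}(D-P_1)$ to absorb the coefficients at the ``deleted'' positions $n_1,\ldots,n_g$, and thereby gain an additional $g$ units in the valuation of the remainder at $P_\infty$. This is precisely the Xing--Niederreiter saving advertised in the first sentence of Section~\ref{sec:4}.

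First I would invoke Lemma~\ref{lem:2} with $\mu=1$, so that $r(m)=0$ for all $m$, and observe that showing $\bsT(\ell)\leq g$ for every $\ell\in\NN$ yields $\bsT(m)=\min(m,g)$, which is exactly the $(g,s)$-sequence condition. The case $m\leq g$ is trivial, so I reduce to proving, for any $m>g$ and any $d_1,\ldots,d_s\in\NN_0$ with $d_1+\cdots+d_s=m-g$, that the row vectors $\bsc_j^{(i,m)}$ for $1\leq j\leq d_i$, $1\leq i\leq s$, are linearly independent over $\FF_q$.

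Next I would mimic the argument of the previous theorem: assume a relation $\sum_{i,j}b_j^{(i)}\bsc_j^{(i,m)}=\bszero$, put $\beta:=\sum_{i,j}b_j^{(i)}\beta_j^{(i)}$, and read its generalized local expansion $\beta=\sum_{k\geq 0} A_k z_k$ with $A_k:=\sum_{i,j}b_j^{(i)} a_{j,k}^{(i)}$. The crucial new feature relative to Section~\ref{sec:3} is that the $j$th row of $C^{(i)}$ is obtained by striking out the entries at the gap positions $n_1,\ldots,n_g$. Since $n_g<2g\leq m+g$, the first $m$ coordinates of $\bsc_j^{(i)}$ correspond exactly to the indices $k\in\{0,1,\ldots,m+g-1\}\setminus\{n_1,\ldots,n_g\}$, so the hypothesis forces $A_k=0$ for every such $k$. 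This gives the decomposition
$$\beta \;=\; \sum_{f=1}^{g} A_{n_f}\, w_f \;+\; \tilde\beta, \qquad \nu_{P_\infty}(\tilde\beta)\;\geq\; m+g,$$
since every remaining $z_k$ with $k\geq m+g$ has $\nu_{P_\infty}(z_k)=k$.

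The next step is the Riemann--Roch bookkeeping. Exactly as in the proof of the Section~\ref{sec:3} theorem, Lemma~\ref{lem:3} yields $\beta\in\mathcal{L}(D+(d_1-1)P_1+d_2P_2+\cdots+d_sP_s)$, and since each $w_f\in\mathcal{L}(D-P_1)$ lies in the same space, so does $\tilde\beta$. Combining this with $\nu_{P_\infty}(\tilde\beta)\geq m+g$ places $\tilde\beta$ in $\mathcal{L}(D+(d_1-1)P_1+d_2P_2+\cdots+d_sP_s-(m+g)P_\infty)$, whose degree is
$$2g+(d_1+\cdots+d_s)-1-(m+g)\;=\;g-1+(m-g)-m\;=\;-1,$$
forcing $\tilde\beta=0$. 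The $w_f$-trick has bought exactly the extra $g$ that is needed: without it, the valuation would only be $\geq m$ and the degree would come out to $g-1\geq 0$. Finally, $\beta=\sum_f A_{n_f}w_f$ is a linear relation in the system $\{w_1,\ldots,w_g\}\cup\{\beta_j^{(i)}\}$, so Lemma~\ref{lem:4} gives $A_{n_f}=0$ for all $f$ and $b_j^{(i)}=0$ for all $(i,j)$, which closes the proof. The main obstacle is the clean bookkeeping of the deleted positions together with the containment argument for $\tilde\beta$; once those are set up properly, the degree computation falls out automatically to $-1$.
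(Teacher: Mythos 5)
Your proposal is correct and follows essentially the same route as the paper: your $\tilde\beta$ is exactly the paper's $\beta=\sum_{i,j}b_j^{(i)}\beta_j^{(i)}-\alpha$ with $\alpha=\sum_f A_{n_f}w_f$, and the valuation bound $\nu_{P_\infty}\geq m+g$, the degree-$(-1)$ Riemann--Roch computation, and the final appeal to Lemma~\ref{lem:4} all coincide with the paper's argument. The only cosmetic differences are your preliminary reduction via Lemma~\ref{lem:2} with $\mu=1$ (the paper states the rank condition directly) and the normalization $d_1+\cdots+d_s=m-g$ instead of $\leq m-g$, neither of which changes the substance.
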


\begin{proof}
We prove for every integer $m>g$ and every choice of nonnegative integers $d_1,\ldots,d_s$ satisfying $1 \le d_1+\cdots+d_s\leq m-g$ that the matrix formed by the row vectors
$$\bsc_j^{(i,m)}:= (c_{j,0}^{(i)},\ldots,c_{j,m-1}^{(i)})\in\FF_q^m$$
with $1\leq j\leq d_i,1\leq i\leq s$, has full row rank $d_1+\cdots+d_s$.

Choose $b_j^{(i)}\in\FF_q$ for $1\leq j\leq d_i,1\leq i\leq s$, satisfying \begin{equation}\label{equ:9}\sum_{i=1}^s\sum_{j=1}^{d_i}b_j^{(i)}\bsc_j^{(i,m)}=\bszero\in\FF_q^{m}.\end{equation}
The linearity of the local expansion implies that $$\beta:=\sum_{i=1}^s\sum_{j=1}^{d_i}b_j^{(i)}\beta_j^{(i)}-\underbrace{\sum_{i=1}^s\sum_{j=1}^{d_i}b_j^{(i)}\sum_{f=1}^g a^{(i)}_{j,n_f}w_f}_{=:\alpha}=\sum_{{k=0}\atop{k\neq n_1,\ldots,n_g}} ^\infty \left(\sum_{i=1}^s\sum_{j=1}^{d_i}b_j^{(i)}a^{(i)}_{j,k}\right)z_k.$$ 
In view of the construction algorithm and \eqref{equ:9}, we obtain
$$\beta=\sum_{{k=m+g}} ^\infty \left(\sum_{i=1}^s\sum_{j=1}^{d_i}b_j^{(i)}a^{(i)}_{j,k}\right)z_k.$$ 
Therefore $\nu_{P_\infty}(\beta)\geq m+g$. 

We observe that 
$$\alpha\in \mathcal{L}(D-P_1)\subseteq\mathcal{L}(D+(d_1-1)P_1+d_2P_2+\cdots+d_sP_s) ,$$
$$\beta_j^{(1)}\in\mathcal{L}(D+(j-1)P_1-(j-1)P_2)\subseteq\mathcal{L}(D+(d_1-1)P_1+d_2P_2+\cdots+d_sP_s)  $$ for $1\leq j\leq d_1$, and  $$\beta_j^{(i)}\in\mathcal{L}(D+jP_i-jP_1)\subseteq\mathcal{L}(D+d_iP_i-P_1)\subseteq\mathcal{L}(D+(d_1-1)P_1+d_2P_2+\cdots+d_sP_s)  $$
for $1\leq j\leq d_i,\,2\leq i\leq s$. 
This together with $\nu_{P_\infty}(\beta)\geq m+g$ implies that
$$\beta \in \mathcal{L}(D+(d_1-1)P_1+d_2P_2+\cdots+d_sP_s-(m+g)P_\infty)=:\mathcal{L}(D_{m,d_1,\ldots,d_s}).$$ 
We compute the degree of $D_{m,d_1,\ldots,d_s}$ and obtain $$\deg(D_{m,d_1,\ldots,d_s})\leq 2g+m-g-1-(m+g)=-1,$$ which entails $\beta= 0$. Finally, the linear independence of the system $\{w_1,\ldots,w_g\}\cup \{\beta_j^{(i)}\}_{1\leq i\leq s, \, j\geq 1}$ yields $b_j^{(i)}=0$ for $1\leq j\leq d_i,\,1\leq i\leq s$.  
\end{proof}

\vspace{1cm}
\noindent{Roswitha Hofer, Institute of Financial Mathematics, University of Linz, Altenbergerstr.
69, A-4040 Linz, Austria; email: roswitha.hofer@jku.at} \\

\noindent{Harald Niederreiter, Johann Radon Institute for Computational and Applied
Mathematics, Austrian Academy of Sciences, Altenbergerstr. 69, A-4040
Linz, Austria, and Department of Mathematics, University of Salzburg,
Hellbrunnerstr. 34, A-5020 Salzburg, Austria; email: ghnied@gmail.com}

\end{document}